\newcommand{\bG}{\mathbf G}
\newcommand{\bH}{\mathbf H}
\newcommand{\Q}{\mathbb Q}
\newcommand{\PP}{\mathbb P}
\newcommand{\Z}{\mathbb Z}
\newcommand{\charic}{\operatorname{char}}
\newcommand{\disc}{\operatorname{disc}}
\newcommand{\Hbar}{{\overline{H}}}
\newcommand{\PGL}{\operatorname{PGL}}
\newcommand{\Jac}{\operatorname{Jac}}
\newcommand{\Pf}{\operatorname{Pf}}
\newcommand{\rank}{\operatorname{rank}}
\newcommand{\GL}{\operatorname{GL}}
\newcommand{\SL}{\operatorname{SL}}
\newcommand{\OK}{{\mathcal{O}_K}}
\newcommand{\adj}{{\operatorname{adj}}}
\newcommand{\diag}{\operatorname{Diag}}
\newcommand{\ra}{\longrightarrow}
\newtheorem{theorem}{Theorem}[section]
\newtheorem{lemma}[theorem]{Lemma}
\theoremstyle{definition}
\newtheorem{definition}[theorem]{Definition}
\newtheorem{example}[theorem]{Example}
\newtheorem{remark}[theorem]{Remark}
\newtheorem{algorithm}[theorem]{Algorithm}
\begin{document}
\date{12th September 2023}
\title{Minimisation of 2-coverings of genus 2 Jacobians}

\author{Tom Fisher}
\address{University of Cambridge,
          DPMMS, Centre for Mathematical Sciences,
          Wilberforce Road, Cambridge CB3 0WB, UK}
\email{T.A.Fisher@dpmms.cam.ac.uk}

\author{Mengzhen Liu}
\address{Harvard University,
Department of Mathematics,
Science Center Room 325,
1~Oxford Street,
Cambridge, MA 02138, USA}
\email{aliu@math.harvard.edu}

\renewcommand{\baselinestretch}{1.1}
\renewcommand{\arraystretch}{1.3}

\renewcommand{\theenumi}{\roman{enumi}}

\begin{abstract}
  An important problem in computational arithmetic geometry is to find
  changes of coordinates to simplify a system of polynomial equations
  with rational coefficients. This is tackled by a combination of two
  techniques, called minimisation and reduction. We give an algorithm
  for minimising certain pairs of quadratic forms, subject to the 
  constraint that the first quadratic form is fixed.
  This has applications to 2-descent on the Jacobian of a genus $2$ curve.
\end{abstract}

\maketitle

\section{Introduction}

\subsection{Models for $2$-coverings}
\label{sec:models}

We work over a field $K$ with $\charic(K) \not= 2$.  Let $C$ be a
smooth curve of genus $2$ with equation
$y^2 = f(x) = f_6 x^6 + f_5 x^5 + \ldots + f_1 x + f_0$ where
$f \in K[x]$ is a polynomial of degree $6$.  We fix throughout the
polynomial
\begin{equation*}
G = z_{12}z_{34}-z_{13}z_{24}+z_{23}z_{14}.
\end{equation*}

The following two definitions are based on those
in~\cite[Section 2.4]{genus2ctp}.

\begin{definition}
  \label{def:model}
  A {\em model} (for a $2$-covering of the Jacobian of $C$) is a pair
  $(\lambda,H)$ where $\lambda \in K^\times$ and
  $H \in K[z_{12},z_{13},z_{23},z_{14},z_{24},z_{34}]$ is a quadratic
  form satisfying
  \[ \det(\lambda x \bG - \bH) = -\lambda^6 f_6^{-1} f(x) \]
  where $\bG$ and $\bH$ are the matrices of second partial derivatives
  of $G$ and $H$.
\end{definition}

We identify the space of column vectors of length $6$ and the space of
$4 \times 4$ alternating matrices via the map
\[ A : z = \begin{pmatrix} z_{12} \\ z_{13} \\ z_{23} \\ z_{14} \\ z_{24} \\ z_{34}
  \end{pmatrix}
  \mapsto \begin{pmatrix}
    0 & z_{12} & z_{13} & z_{14} \\
    & 0 & z_{23} & z_{24} \\
   \multicolumn{2}{c}{\multirow{2}{*}{\,$-$}}  & 0 & z_{34} \\
    & & & 0
  \end{pmatrix} \]
so that $G(z)$ is the Pfaffian of $A(z)$.  Then
each $4 \times 4$ matrix $P$ uniquely determines a $6 \times 6$ matrix
$\wedge^2 P$ such that
\[ P A(z) P^T = A( (\wedge^2 P) z) \] for all column vectors $z$. For
$F \in K[x_1, \ldots, x_n]$ and $M \in \GL_n(K)$ we write $F \circ M$
for the polynomial satisfying $(F \circ M)(x) = F(Mx)$ for all columns
vectors $x$.  The Pfaffian $\Pf(A)$ of an alternating matrix $A$ has
the properties that $\Pf(A)^2 = \det(A)$ and
$\Pf (P A P^T) = (\det P) \Pf(A)$.  The latter tells us that
$G \circ \wedge^2 P = (\det P) G$.  It is also not hard to show that
$\det(\wedge^2 P) = (\det P)^3$.

\begin{definition}
\label{def:action}
Two models are {\em $K$-equivalent} if they are in the same orbit for
the action of $K^\times \times \PGL_4(K)$ via
\[ (c,P): (\lambda,H) \mapsto \left(c\lambda,
    \frac{c}{\det P} H \circ \wedge^2 P\right). \]
\end{definition}
It may be checked using the above observations that this is a well
defined (right) group action on the space of models (for a fixed
choice of genus $2$ curve $C$).

\begin{example}
  \label{ex1}
  Let $C/\Q$ be the genus $2$ curve given by $y^2 = f(x)$ where
\[ f(x) = -28 x^6 + 84 x^5 - 323 x^4 + 506 x^3 - 471 x^2 + 232 x - 60. \]
  One of the elements of the 2-Selmer group of $\Jac C$ is
  represented by the model
\begin{align*}
(\lambda_1&,H_1) = (42336, \,\, 25128 z_{12}^2 + 24480 z_{12} z_{13} + 14031 z_{12} z_{23} +
    15408 z_{12} z_{14} \\ & + 13959 z_{12} z_{24} + 25407 z_{12} z_{34} + 2232 z_{13}^2 -
    16407 z_{13} z_{23} + 4464 z_{13} z_{14} \\ & - 22815 z_{13} z_{24} + 1161 z_{13} z_{34} +
    2329 z_{23}^2 + 15282 z_{23} z_{14} + 7687 z_{23} z_{24} \\ & - 19547 z_{23} z_{34} -
    2304 z_{14}^2 - 17838 z_{14} z_{24} - 22590 z_{14} z_{34} - 134 z_{24}^2 \\ & +
                    41978 z_{24} z_{34} - 99584 z_{34}^2).
\end{align*}
Applying the transformation $(c,P)$ with $c = 1/3024$ and
\begin{equation}
\label{cob}
  P = \begin{pmatrix}
    2 & -19 & 2 & 5 \\
    4 & 4 & -31 & 38 \\
    2 & 2 & 37 & 40 \\
    -7 & -7 & -14 & 7
\end{pmatrix}
\end{equation}
gives the $\Q$-equivalent model    
\begin{align*}
(\lambda_2&,H_2) = (14, \,\, z_{12} z_{23} + 2 z_{12} z_{14} - z_{12} z_{24} + 8 z_{12} z_{34} - 7 z_{13}^2 -
    13 z_{13} z_{23} \\ & - 12 z_{13} z_{14} - 15 z_{13} z_{24} - 20 z_{13} z_{34} - 5 z_{23}^2 -
    2 z_{23} z_{14} - 25 z_{23} z_{24} \\ & - 59 z_{23} z_{34} - 4 z_{14}^2 - 14 z_{14} z_{24} -
                  18 z_{14} z_{34} + 17 z_{24}^2 - 37 z_{24} z_{34} - 11 z_{34}^2).
         \end{align*}
\end{example}

\subsection{Relation to previous work}

The change of coordinates~\eqref{cob} 
was found by a combination of two techniques, called minimisation and
reduction.  {\em Minimisation} seeks to remove prime factors from a
suitably defined invariant (usually the discriminant). The prototype
example is using Tate's algorithm to compute a minimal Weierstrass
equation for an elliptic curve.  {\em Reduction} seeks to a make a
final unimodular substitution so that the coefficients are as small as
possible. The prototype example is the reduction algorithm for
positive definite binary quadratic forms.

Algorithms for minimising and reducing 2-, 3-, 4- and 5-coverings of
elliptic curves are given by Cremona, Fisher and Stoll \cite{CFS},
and Fisher \cite{F}, building on earlier work of Birch and
Swinnerton-Dyer \cite{BSD} for $2$-coverings.  Algorithms for
minimising some other representations associated to genus $1$ curves
are given by Fisher and Radicevic \cite{FR}.  A general framework for
minimising hypersurfaces is described by Kollar~\cite{K}, and this has
been refined by Elsenhans and Stoll \cite{ES}; in
particular they give practical algorithms for plane curves
(of arbitrary degree) and for cubic
surfaces. Algorithms for minimising Weierstrass equations for general
hyperelliptic curves are given by Q. Liu \cite{L}.

In this paper we give an algorithm for minimising $2$-coverings of
genus $2$ Jacobians. These are represented by pairs of quadratic forms
(see Definition~\ref{def:model}) where the first quadratic form is
fixed. We only consider minimisation and not reduction, since the
latter is already treated in~\cite[Remark 4.3]{genus2ctp}.

Our minimisation algorithm plays a key role in the work of the first
author and Jiali~Yan \cite{genus2ctp} on computing the Cassels-Tate
pairing on the $2$-Selmer group of a genus~$2$ Jacobian. Indeed the
method presented in {\em loc. cit.} for computing the Cassels-Tate
pairing relies on being able to find rational points on certain
twisted Kummer surfaces. Minimising and reducing our representatives
for the $2$-Selmer group elements simplifies the equations for these
surfaces, and so makes it more likely that we will be able to find
such rational points.

Earlier works on minimisation (see in particular \cite{CFS})
considered both minimisation theorems (i.e., general bounds on the
minimal discriminant) and minimisation algorithms (i.e., practical
methods for finding a minimal model equivalent to a given one). For
$2$-coverings of hyperelliptic Jacobians, some minimisation theorems
have already been proved; see the papers of Bhargava and
Gross~\cite[Section~8]{BG}, and Shankar and
Wang~\cite[Section~2.4]{SW}. We will not revisit these results, as our
focus is on the minimisation algorithms.

\begin{remark}
\label{rem:index2}
As noted in \cite[Lemma~17.1.1]{CF}, \cite[Section~19.1]{FH}
and \cite[Section~2.4]{genus2ctp} the
quadratic form $G = z_{12}z_{34}-z_{13}z_{24}+z_{23}z_{14}$
has two algebraic families of $3$-dimensional
isotropic subspaces. Moreover, the transformations considered in
Definition~\ref{def:action} do not describe the full projective
orthogonal group of $G$, but only the index~$2$ subgroup that
preserves (rather than swaps over) these two algebraic families.
Restricting attention to this index~$2$ subgroup (when defining
equivalence) makes no difference to the minimisation problem
(see Remark~\ref{rem:dual}), but as explained in
\cite[Sections~2.4 and~2.5]{genus2ctp} it is important in the context
of $2$-descent, since it means we can distinguish between elements of
the $2$-Selmer group with the same image in the fake $2$-Selmer group.
\end{remark}

Some Magma \cite{magma} code accompanying this article, including
an implementation of our algorithm,
will be made available from the first author's website. 

\subsection*{Acknowledgements}
This work originated as a summer project carried out by the second author
and supervised by the first author. We thank the Research in the CMS
Programme for their support.

\section{Statement of the algorithm}

We keep the notation of Section~\ref{sec:models}, but now let $K$ be a
field with discrete valuation $v : K^\times \to \Z$, valuation ring
$\OK$, uniformiser $\pi$, and residue field $k$. If $F$ is a
polynomial with coefficients in $K$ then we write $v(F)$ for the
minimum of the valuations of its coefficients.

\begin{definition} A model $(\lambda,H)$ is {\em integral} if
  $v(H) \geqslant 0$. It is {\em minimal} if $v(\lambda)$ is minimal
  among all $K$-equivalent integral models.
\end{definition}  

Using the action of $K^\times$ (see Definition~\ref{def:action})
to clear denominators it is clear that
any model is $K$-equivalent to an integral model.  By
Definition~\ref{def:model} we have
$v(\lambda) \geqslant (v(f_6) - v(f_i))/(6-i)$ for all
$i= 0,1, \ldots, 5$.  We cannot have $f_0 = \ldots = f_5 = 0$ since
$C$ is a smooth curve of genus $2$.  Therefore $v(\lambda)$
is bounded below, and minimal models exist.

It also follows from Definition~\ref{def:model}
that if $v(f_6) = v(\disc f) = 0$ then any integral model $(\lambda,H)$
has $v(\lambda) \geqslant 0$. Therefore, in global applications,
minimality is automatic at all but a finite set of primes,
which we may determine by factoring.

Returning to the local situation, there is an evident recursive
algorithm for computing minimal models if we can solve
the following problem.

\bigskip

\noindent
{\bf Minimisation problem.}
Given an integral quadratic form $H \in \OK[z_{12}, \ldots,z_{34}]$
determine whether there exists $P \in \PGL_4(K)$ such
that \[v\left(\frac{1}{\det P} H \circ \wedge^2 P\right) > 0\] and
find such a matrix $P$ if it exists.

\bigskip

Our solution to this problem (see Algorithm~\ref{main_alg}) is an
iterative procedure that computes the required transformation as a
composition of simpler transformations. These simpler transformations
are either given by a matrix in $\GL_4(\OK)$, in which case we call
the transformation an {\em integral change of coordinates}, or given
by one of the following operations, corresponding to
$P = \diag(1,1,1,\pi), \diag(1,1,\pi,\pi)$ or $\diag(1,\pi,\pi,\pi)$.

\begin{definition}
  \label{def_ops}
  We define the following three operations on
  quadratic forms $H$:
\begin{itemize}
\item{Operation~1.} Replace $H$ by
  $\frac{1}{\pi} H(z_{12},z_{13},z_{23},\pi z_{14},\pi z_{24},\pi z_{34})$,
\item{Operation~2.} Replace $H$ by
  $H(\pi^{-1} z_{12},z_{13},z_{23},z_{14},z_{24},\pi z_{34})$,
\item{Operation~3.} Replace $H$ by
  $\frac{1}{\pi} H(z_{12},z_{13},\pi z_{23},z_{14},\pi z_{24},\pi z_{34})$,
\end{itemize}
\end{definition}

The following algorithm suggests some transformations that we might try
applying to $H$. In applications $W \subset k^6$ will be a subspace
determined by the reduction of $H$ mod $\pi$.  We write
$e_{12},e_{13},e_{23},e_{14},e_{24},e_{34}$ for the standard basis of $k^6$,
and identify the dual basis with $z_{12},z_{13},z_{23},z_{14},z_{24},z_{34}$.

\begin{algorithm}
  \label{alg:totry}
  (Subalgorithm to suggest some transformations.)  We take as input an
  integral quadratic form $H \in \OK[z_{12}, \ldots,z_{34}]$ and a
  vector space $W \subset k^6$ that is isotropic for $G$. When we make
  an integral change of coordinates we apply the same transformation
  (or rather its reduction mod $\pi$) to $W$. The output is either one
  or two transformations $P \in \PGL_4(K)$.
\begin{itemize}
\item If $\dim W = 1$ then make an integral change of coordinates so
  that $W = \langle e_{12} \rangle$. Then apply Operation~2.
\item If $\dim W = 2$ then make an integral change of coordinates so
  that $W = \langle e_{12},e_{13} \rangle$. Then apply either
  Operation~1 or Operation~3.
\item If $\dim W = 3$ then either make an integral change of
  coordinates so that $W = \langle e_{12}, e_{13}, e_{23}\rangle$ and
  apply Operation~1, or make an integral change of coordinates so that
  $W = \langle e_{12}, e_{13}, e_{14}\rangle$ and apply Operation~3.
\end{itemize}
\end{algorithm}

We write $\Hbar \in k[z_{12},\ldots,z_{34}]$ for the reduction of $H$
mod $\pi$.  If $\charic(k) \not=2$ then the rank and kernel of $\Hbar$
are defined as the rank and kernel of the corresponding $6 \times 6$
symmetric matrix. If $\charic(k)=2$ then we assume that $k$ is
perfect, so that
\[ \Hbar = \frac{\partial \Hbar}{\partial z_{12}} = \ldots =
  \frac{\partial \Hbar}{\partial z_{34}} = 0 \]
defines a $k$-vector space, which we call $\ker \Hbar$.  We
then define
\[ \rank \Hbar = 6 - \dim \ker \Hbar. \]

We continue to write $G$ for the reduction of
$G$ mod $\pi$, as it should always be clear from the context which of
these we mean.

\begin{algorithm} \label{main_alg}
  (Minimisation algorithm.)  We take as input an integral quadratic
  form $H \in \OK[z_{12}, \ldots,z_{34}]$.  The output is {\tt
    TRUE/FALSE} according as whether there exists $P \in \PGL_4(K)$
  such that
  \[ v \left( \frac{1}{\det P}H \circ \wedge^2 P \right) > 0.\]
  \begin{itemize}
  \item[Step~1.] Compute $r = \rank \Hbar$. If $r=0$ then return {\tt TRUE}.
  \item[Step~2.] If $r=1$ then try making an integral change of
    coordinates so that $\Hbar = z_{34}^2$. If the reductions of $G$
    and $\pi^{-1} H(z_{12}, \ldots,z_{24},0)$ mod $\pi$ have a common
    $3$-dimensional isotropic subspace $W \subset \ker \Hbar$, then
    (since running Algorithm~\ref{alg:totry} on any such subspace $W$
    gives $v(H) > 0$) return {\tt TRUE}.
  \item[Step~3.] If $r = 2$ then try running Algorithm~\ref{alg:totry}
    on each codimension $1$ subspace $W \subset \ker \Hbar$ that is
    isotropic for $G$. If one of the suggested transformations gives
    $v(H) > 0$ then return {\tt TRUE}.
  \item[Step~4.] If $r \in \{1,2\}$ and $\Hbar$ factors as a product
    of linear forms defined over $k$, say $\Hbar = \ell_1 \ell_2$,
    then for each $i=1,2$ try making an integral change of coordinates
    so that $\ell_i=z_{34}$ and then apply Operation~2. If at least
    one of these transformations gives $v(H) \geqslant 0$ then select
    one with $\rank \Hbar$ as small as possible and go to Step~1.
  \item[Step~5.] If $r \in \{ 2,3,4,5 \}$ then try running
    Algorithm~\ref{alg:totry} on $W = \ker \Hbar$ if this subspace is
    isotropic for $G$, and otherwise on each codimension $1$ subspace
    $W \subset \ker \Hbar$ that is isotropic for $G$. If at least one
    of the suggested transformations gives $v(H) \geqslant 0$ then
    select one with $\rank \Hbar$ as small as possible and go to
    Step~1.
  \item[Step~6.] If this step is reached, or if after visiting Step~1
    the first time and returning to it a further 4 times we still do
    not have $v(H) > 0$, then return {\tt FALSE}.
\end{itemize}
\end{algorithm}

There is no difficulty in modifying the algorithm so that when it
returns {\tt TRUE} the corresponding transformation $P \in \PGL_4(K)$
is also returned.  In Section~\ref{sec:implement} we give further
details of the implementation, in particular explaining how we make
the integral changes of coordinates, and giving further details of
Step 2. In Sections~\ref{sec:weights} and~\ref{sec:proof} we prove
that Algorithm~\ref{main_alg} is correct.

\section{Remarks on implementation}
\label{sec:implement}

In Algorithms~\ref{alg:totry} and~\ref{main_alg} we are asked to try
making various integral changes of coordinates.  It is important to
realise that we are restricted to considering matrices of the form
$\wedge^2 P$ for $P \in \GL_4(\OK)$, and not general elements of
$\GL_6(\OK)$. Therefore some care is required both in determining
whether a suitable transformation exists, and in finding one when it
does.

Since the natural map $\GL_4(\OK) \to \GL_4(k)$ is surjective, we may
concentrate on the mod $\pi$ situation here. Notice however that in
the global application with $K = \Q$ and $v=v_p$ it is better to use
the surjectivity of $\SL_4(\Z) \to \SL_4(\Z/p\Z)$, so that
minimisation at $p$ does not interfere with minimisation at other
primes.

Let $k^4$ have basis $e_1, \ldots, e_4$.  We identify
$\wedge^2 k^4 = k^6$ via $e_i \wedge e_j \mapsto e_{ij}$.  Each linear
subspace $W \subset k^6$ determines a linear subspace
$V_0 \subset k^4$ given by
\begin{equation*}
V_0 =\{v \in k^4 \mid v \wedge w = 0 \text{ for all } w \in W \}
\end{equation*}
where $\wedge$ is the natural map
$k^4 \times \wedge^2 k^4 \to \wedge^3 k^4$.  Let $V_1$ be the analogue
of~$V_0$ when $W$ is replaced by its orthogonal complement with
respect to $G$.
\begin{lemma}
  \label{lem:cc}
  Let $W \subset k^6$ be a subspace, and let $P\in \GL_4(k)$.
  \begin{enumerate}
  \item If $\dim W = 1$ then $\wedge^2 P$ sends $W$ to
    $\langle e_{12} \rangle$ if and only if $P$ sends $V_0$ to
    $\langle e_1, e_2 \rangle$.
  \item If $\dim W = 2$ or $3$ then $\wedge^2 P$ sends $W$ to
    a subspace of $\langle e_{12},e_{13},e_{14} \rangle$
    if and only if $P$ sends $V_0$ to $\langle e_1 \rangle$.
  \item If $\dim W = 5$ then $\wedge^2 P$ sends $W$ to
    $\langle e_{12},e_{13},e_{14},e_{23},e_{24}\rangle$ if and only if
    $P$ sends $V_1$ to $\langle e_1, e_2 \rangle$.
  \end{enumerate}
\end{lemma}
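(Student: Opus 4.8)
The plan is to exploit the naturality of the assignment $W \mapsto V_0$ with respect to the simultaneous actions of $P$ on $k^4$ and $\wedge^2 P$ on $k^6 = \wedge^2 k^4$. The key identity is that, for $v \in k^4$ and $w \in \wedge^2 k^4$,
\[ (Pv) \wedge (\wedge^2 P)(w) = (\wedge^3 P)(v \wedge w), \]
and since $\wedge^3 P$ is invertible this vanishes precisely when $v \wedge w = 0$. Writing $W' = (\wedge^2 P)(W)$, this gives at once the equivariance $V_0(W') = P(V_0(W))$. Each assertion then reduces to computing $V_0$ of the explicit target subspace, together with a converse computation showing that the value of $V_0$ pins down $W'$.

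First I would record the elementary computations, carried out by expanding a general $v = \sum_i a_i e_i$ and reading off the vanishing of the various $v \wedge e_{ij} \in \wedge^3 k^4$ on the given generators:
\[ V_0(\langle e_{12}\rangle) = \langle e_1, e_2\rangle, \qquad V_0(U) = \langle e_1\rangle \text{ for every } U \subseteq \langle e_{12},e_{13},e_{14}\rangle \text{ with } \dim U \in \{2,3\}. \]
Together with equivariance, the forward implications of (i) and (ii) are then immediate. For the converse implications I would prove two decomposability facts: for a single $2$-vector $w'$ one has $e_1 \wedge w' = 0$ if and only if $w' \in \langle e_{12}, e_{13}, e_{14}\rangle$, and $V_0(\langle w'\rangle) = \langle e_1, e_2\rangle$ forces $w' \in \langle e_{12}\rangle$. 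In (i) the latter gives $\langle w'\rangle = \langle e_{12}\rangle$; in (ii) the hypothesis $V_0(W') = \langle e_1\rangle$ yields $e_1 \in V_0(W')$, hence $W' \subseteq \langle e_{12},e_{13},e_{14}\rangle$ by the former.

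For part (iii) I would identify the polar bilinear form of $G$ with the wedge pairing $\wedge^2 k^4 \times \wedge^2 k^4 \to \wedge^4 k^4 \cong k$, so that $W^\perp = \{ w' : w \wedge w' = 0 \text{ for all } w \in W \}$ and $V_1(W) = V_0(W^\perp)$. Since $(\wedge^2 P)u \wedge (\wedge^2 P)w = (\det P)(u \wedge w)$, equivalently $G \circ \wedge^2 P = (\det P) G$ from the excerpt, taking orthogonal complements is equivariant, $((\wedge^2 P)(W))^\perp = (\wedge^2 P)(W^\perp)$, so $V_1$ satisfies the same naturality as $V_0$. A direct computation gives $\langle e_{12}\rangle^\perp = \langle e_{12},e_{13},e_{14},e_{23},e_{24}\rangle =: T$, whence $T^\perp = \langle e_{12}\rangle$ and $V_1(T) = V_0(\langle e_{12}\rangle) = \langle e_1, e_2\rangle$. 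Naturality then gives the forward direction; for the converse, $\dim W = 5$ forces $\dim (W')^\perp = 1$, and $V_1(W') = \langle e_1, e_2\rangle$ forces $(W')^\perp = \langle e_{12}\rangle$ by the decomposability fact, so $W' = ((W')^\perp)^\perp = T$ by nondegeneracy of $G$.

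I expect the main obstacle to be purely in the bookkeeping: getting the exterior-algebra identities sign-correct, and, more substantively, using the dimension hypotheses in the right place to upgrade the statement ``$e_1 \in V_0$'' to ``$V_0 = \langle e_1\rangle$''. This point is essential, since for $\dim W = 1$ a line inside $\langle e_{12},e_{13},e_{14}\rangle$ has $\dim V_0 = 2$, so the clean conclusion of (ii) genuinely requires $\dim W \in \{2,3\}$.
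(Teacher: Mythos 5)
Your proof is correct and takes essentially the same route as the paper: equivariance of $W \mapsto V_0$ under the simultaneous actions of $P$ on $k^4$ and $\wedge^2 P$ on $k^6$, explicit computation of $V_0$ for the model subspaces, and deduction of (iii) from (i) by passing to orthogonal complements with respect to $G$, using that $\wedge^2 P$ preserves $G$ up to the scalar $\det P$. The paper's proof is merely a terser version of yours, asserting the two ``if and only if'' computations and the naturality without spelling out the decomposability facts you use for the converse directions.
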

\begin{proof}
  In (i) we have $W = \langle e_{12} \rangle$ if and only if $V_0 =
  \langle e_1, e_2 \rangle$, and in (ii) we have $W \subset \langle
  e_{12},e_{13},e_{14} \rangle$ if and only if $V_0 = \langle e_1
  \rangle$.  Since the definition of $V_0$ in terms of
  $W$ behaves well under all changes of coordinates this proves (i)
  and (ii).  As noted in Section~\ref{sec:models}, all transformations
  of the form $\wedge^2 P$ preserve
  $G$ (up to a scalar multiple). Therefore (iii) follows from (i) on
  replacing $W$ by its orthogonal complement with respect to $G$.
\end{proof}

\begin{remark}
  \label{rem:dual}
  Let $\bG$ be the matrix of second partial derivatives of $G$, i.e.,
  the $6 \times 6$ matrix with entries $1,-1,1,1,-1,1$ on the
  antidiagonal. A direct calculation shows that for any $4 \times 4$
  matrix $P$ we have
  \begin{equation*}
    \wedge^2 (\adj(P)^T) = (\det P) \bG (\wedge^2 P) \bG.
  \end{equation*}
  Letting $\PGL_4$
  act on the space of quadratic forms via
  $P : H \mapsto \frac{1}{\det P}H \circ \wedge^2 P$,
  this tells us that applying $P$
  to a quadratic form $H(z_{12},z_{13},z_{23},z_{14},z_{24},z_{34})$ has the
  same effect as applying $P^{-T}$ to its {\em dual quadratic form} 
  which we define to be $H(z_{34},-z_{24},z_{14},z_{23},-z_{13},z_{12})$.
  We note that the substitution used to replace $H$ by its dual swaps
  over the two families of isotropic subspaces
  in Remark~\ref{rem:index2}.
\end{remark}

We find the changes of coordinates in Algorithm~\ref{alg:totry} by
using Lemma~\ref{lem:cc}(i) and (ii), and the analogue of (ii) after
passing to the dual as in Remark~\ref{rem:dual}.  We find the changes
of coordinates in Steps~2 and~4 of Algorithm~\ref{main_alg} using
Lemma~\ref{lem:cc}(iii).

\begin{remark}
  In Step~2 of Algorithm~\ref{main_alg} we must find if possible a
  3-dimensional subspace
  $W \subset \langle e_{12},e_{13},e_{14},e_{23},e_{24}\rangle$ that
  is isotropic for both $G$ and $\Hbar_1$ where
  \[ H_1(z_{12}, \ldots,z_{24}) = \pi^{-1} H(z_{12}, \ldots,z_{24},0).\]
  To be isotropic for $G$ we need that
  $\langle e_{12} \rangle \subset W$.  So such a subspace $W$
  can only exist if $\overline{H}_1(1,0, \ldots,0)=0$. We assume
  that this is the case and write
  \[ \overline{H}_1(z_{12}, \ldots,z_{24}) = z_{12}
    h_1(z_{13},z_{23},z_{14},z_{24}) +
    h_2(z_{13},z_{23},z_{14},z_{24}) \] where $h_i$ is a 
  homogeneous polynomial of degree $i$.  Our problem reduces to that
  of finding a line contained in
  \[ \{ z_{13}z_{24} - z_{23}z_{14} = h_1 = h_2 = 0 \} \subset
    \PP^3. \] The well known description of the lines on
  $\{ z_{13}z_{24} - z_{23}z_{14} = 0 \} \subset \PP^3$ suggests
  that we substitute
  $(z_{13},z_{23},z_{14},z_{24}) = (x_1y_1,x_1y_2,x_2y_1,x_2y_2)$ into
  $h_1$ and $h_2$, take the GCD, and factor into irreducibles.  The
  lines of interest now correspond to linear factors of the
  form $\alpha x_1 + \beta x_2$ or $\gamma y_1 + \delta y_2$.
\end{remark}

\begin{remark}
  In Steps 3 and 5 of Algorithm~\ref{main_alg}, when $\ker \Hbar$ is
  not itself isotropic for $G$, we must find all codimension $1$
  subspaces of $\ker \Hbar$ that are isotropic for $G$. Since the
  restriction of $G$ to $\ker \Hbar$ is a non-zero quadratic form, it
  can have at most two linear factors. There are therefore at most two
  codimension $1$ subspaces we need to consider.  In particular, the
  number of times that Algorithm~\ref{main_alg} applies one of the
  operations in Definition~\ref{def_ops} is uniformly bounded.
\end{remark}

\section{Weights and Admissibility}
\label{sec:weights}

Let $H \in \OK[u_0, \ldots,u_5]$ be an integral quadratic form and
suppose that there exists $P \in \GL_4(K)$ such that
\[v\left(\frac{1}{\det P} H \circ \wedge^2 P\right) > 0.\] Then $P$
is equivalent to a matrix in Smith normal form, say
\[P = U {\rm Diag}(\pi^{w_1},\pi^{w_2},\pi^{w_3},\pi^{w_4}) V\] for
some $U,V \in \GL_4(\OK)$ and $w_1,w_2,w_3,w_4 \in \Z$.  We say that
the weight $w = (w_1,w_2,w_3,w_4)$ is {\em admissible} for $H$. It is
clear that permuting the entries of $w$, or adding the same integer to
all entries, has no effect on admissibility.

\begin{definition}
\label{def:dom}
The weight $w = (w_1,w_2,w_3,w_4)$ {\em dominates} the weight
$w' = (w'_1,w'_2,w'_3,w'_4)$ if
\begin{equation}
  \label{wt:ineq}
  \begin{aligned}
    &\max(1 + w_1 + w_2 + w_3 + w_4 - w_i - w_j - w_k - w_l,0) \\
    & \hspace{5em} \geqslant \max(1 + w'_1 + w'_2 + w'_3 + w'_4 - w'_i
    - w'_j - w'_k - w'_l,0)
  \end{aligned}
\end{equation}
for all $1 \leqslant i < j \leqslant 4$ and $1 \leqslant k < l \leqslant 4$.
\end{definition}

This definition is motivated by the fact that if $w$ dominates $w'$
and $w$ is admissible for $H$ then $w'$ is admissible for $H$. Our
next lemma shows that (for the purpose of proving that
Algorithm~\ref{main_alg} is correct) it suffices to consider finitely
many (in fact 12) weights.
  
\begin{lemma}\label{minimalcaselist}
  Every weight $w = (0,a,b,c) \in \Z^4$ with
  $0 \leqslant a \leqslant b \leqslant c$ dominates one of the
  following weights
\begin{align*}  &  (0,0,0,0), \,\,
    (0,0,0,1), \,\, (0,1,1,1), \,\,
    (0,0,1,1), \,\,
    (0,0,1,2), \,\, (0,1,2,2), \\
   & (0,1,1,2), \,\,
    (0,1,1,3), \,\, (0,2,2,3), \,\,
    (0,1,2,3), \,\,
    (0,1,2,4), \,\, (0,2,3,4).
\end{align*}
\end{lemma}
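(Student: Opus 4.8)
The plan is to replace Definition~\ref{def:dom} by an equivalent but much shorter criterion expressed through the \emph{consecutive gaps} of a weight, and then to partition the cone of normalised weights into twelve regions, one for each weight on the list.

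First I would invoke the two invariances recorded after Definition~\ref{def:dom}—permuting the entries and adding a fixed integer to all of them—to reduce to $w=(0,a,b,c)$ with $0\leqslant a\leqslant b\leqslant c$, and set $g_1=a$, $g_2=b-a$, $g_3=c-b$, so that $g_1,g_2,g_3\geqslant 0$; write $g_1',g_2',g_3'$ for the gaps of the weight $w'$ to be dominated.

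Next I would simplify the inequalities~\eqref{wt:ineq}. Writing $S_{ij,kl}$ for the bracketed expression $1+w_1+w_2+w_3+w_4-w_i-w_j-w_k-w_l$, one sees that when $\{i,j\}$ and $\{k,l\}$ are disjoint $S_{ij,kl}=1$ identically, so no condition arises. When the two pairs share exactly one index, $S_{ij,kl}=1+w_r-w_m$ with $m$ the common and $r$ the missing index; the cases $w_r>w_m$ collapse, after cancelling partial sums of gaps, to $g_i\geqslant g_i'$ for $i=1,2,3$ (the longer partial-sum inequalities being implied by these), while the cases $w_r<w_m$, since $\max(1-t,0)$ is nonzero only for $t=0$, force $g_i'=0\Rightarrow g_i=0$; together with $g_i\geqslant g_i'$ this says that the gaps of $w$ and $w'$ vanish in exactly the same positions. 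Of the six ``equal-pair'' functionals two are implied by $g_i\geqslant g_i'$, and the remaining four reduce to
\[
\begin{aligned}
\max(1+g_1-g_3,0)&\geqslant\max(1+g_1'-g_3',0), & \max(1+g_3-g_1,0)&\geqslant\max(1+g_3'-g_1',0),\\
\max(1-g_1-g_3,0)&\geqslant\max(1-g_1'-g_3',0), & \max(1-g_1-2g_2-g_3,0)&\geqslant\max(1-g_1'-2g_2'-g_3',0).
\end{aligned}
\]
This collapse of~\eqref{wt:ineq} to the four displayed inequalities together with the gap conditions is the part I expect to require the most care, precisely because of the sign bookkeeping in the reverse functionals and the truncation by $\max(\cdot,0)$.

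Finally I would rewrite the twelve listed weights in gap coordinates and observe that they are organised by support (the set of nonzero gaps): for each of the six supports $\emptyset,\{1\},\{2\},\{3\},\{1,2\},\{2,3\}$ there is a single listed weight, namely the one whose nonzero gaps all equal $1$, while the supports $\{1,3\}$ and $\{1,2,3\}$ each contribute three. Given $w$, I would choose the listed weight with the same support as $w$—this makes the vanishing-together condition automatic—and, for the supports $\{1,3\}$ and $\{1,2,3\}$, select among the three candidates according to whether $g_1<g_3$, $g_1=g_3$, or $g_1>g_3$. In each of the resulting twelve cases the inequalities $g_i\geqslant g_i'$ hold because a nonzero gap is automatically $\geqslant 1$, and the four displayed inequalities are immediate: as soon as two gaps are positive the left-hand sides of the last two vanish, and the first two amount to comparing the sign of $g_1-g_3$ against the chosen target. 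This exhausts all weights and uses each listed weight exactly once.
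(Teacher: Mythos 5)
Your proposal is correct, and its combinatorial skeleton coincides exactly with the paper's: your twelve cases (the eight possible supports of the gap vector $(g_1,g_2,g_3)$, with the supports $\{1,3\}$ and $\{1,2,3\}$ split three ways by the sign of $g_1-g_3$) are precisely the paper's eight cases determined by which of $0\leqslant a\leqslant b\leqslant c$ are equalities, refined by the trichotomy $a+b<c$, $a+b=c$, $a+b>c$ (note that $a+b-c=g_1-g_3$), and in every case you assign the same target weight that the paper does. Where you genuinely differ is in how domination is verified. The paper writes out the $6\times 6$ matrix of functionals and, case by case, checks the entrywise inequality against the target's matrix by direct numerical estimation (with details for one representative case). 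You instead prove a once-and-for-all collapse of Definition~\ref{def:dom} in gap coordinates: the disjoint-pair functionals are identically $1$; the shared-index functionals are $1+w_r-w_m$ and reduce to $g_i\geqslant g_i'$ together with equality of supports; and only four diagonal functionals survive, the other two ($1+g_1+2g_2+g_3$ and $1+g_1+g_3$) being monotone in the gaps. I checked this collapse and it is correct, including the sign bookkeeping you flagged as delicate. What your route buys is an explanation of the list itself --- it makes visible why there are exactly $6+3+3$ targets and reduces each case to a one-line check; what the paper's route buys is self-containedness --- each case is a finite numerical verification requiring no preliminary equivalence lemma. One sentence of yours needs tightening: for targets containing a gap equal to $2$ (e.g.\ $(1,0,2)$ chosen when $g_1<g_3$), the required inequality $g_3\geqslant g_3'=2$ does not follow merely from ``a nonzero gap is $\geqslant 1$''; it needs integrality together with strictness, $g_3>g_1\geqslant 1$, which is exactly what your sign-based selection guarantees, so the gap is cosmetic rather than mathematical.
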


\begin{proof}
  We list the pairs $(i,j)$ and $(k,l)$ in Definition~\ref{def:dom} in
  the order $(1,2)$, $(1,3)$, $(2,3)$, $(1,4)$, $(2,4)$, $(3,4)$.
  Taking $w = (0,a,b,c)$, the left hand side of~\eqref{wt:ineq} is
  $\max(\xi,0)$ where $\xi$ runs over the entries of the following
  symmetric matrix.
  \small
  \begin{equation*}
    \hspace{-0.5em}
    \begin{bmatrix}
      1+b+c-a & 1+c & 1+c-a & 1+b & 1+b-a & 1 \\
      1+c & 1+a+c-b & 1+c-b & 1+a & 1 & 1+a-b \\
      1+c-a & 1+c-b & 1+c-a-b & 1  & 1-a & 1-b \\
      1+b & 1+a & 1 & 1+a+b-c & 1+b-c & 1+a-c \\
      1+b-a & 1 & 1-a & 1+b-c  & 1+b-a-c & 1-c \\
      1 & 1+a-b & 1-b & 1+a-c & 1-c & 1+a-b-c
    \end{bmatrix}
  \end{equation*}
  \normalsize

  We divide into $8$ cases according as to which of the inequalities
  $0 \leqslant a \leqslant b \leqslant c$ are equalities. In fact we make
  the following more precise claims.
  \begin{itemize}
  \item If $0=a=b=c$ then $w = (0,0,0,0)$.
  \item If $0=a=b<c$ then $w$ dominates $(0,0,0,1)$.
  \item If $0=a<b=c$ then $w$ dominates $(0,0,1,1)$.
  \item If $0=a<b<c$ then $w$ dominates $(0,0,1,2)$.
  \item If $0<a=b=c$ then $w$ dominates $(0,1,1,1)$.
  \item If $0<a=b<c$ then $w$ dominates $(0,1,1,3)$, $(0,1,1,2)$ or $(0,2,2,3)$.
  \item If $0<a<b=c$ then $w$ dominates $(0,1,2,2)$.
  \item If $0<a<b<c$ then $w$ dominates $(0,1,2,4)$, $(0,1,2,3)$ or $(0,2,3,4)$.
  \end{itemize}
  In each case where we list three possibilities, we further claim
  that these correspond to the subcases $a+b<c$, $a+b=c$ and $a+b>c$
  (in that order).

  Since the proofs are very similar, we give details in just one
  case. So suppose that $0<a<b<c$ and $a+b=c$. Then we have
  $a \geqslant 1$, $b\geqslant 2$, $c \geqslant 3$, $b-a \geqslant 1$,
  $c-a \geqslant 2$ and $c-b \geqslant 1$.  Listing the pairs $(i,j)$
  and $(k,l)$ in the same order as before, the left hand side
  of~\eqref{wt:ineq} is at least
  \[
    \begin{bmatrix}
      5 & 4 & 3 & 3 & 2 & 1 \\
      4 & 3 & 2 & 2 & 1 & 0 \\
      3 & 2 & 1 & 1 & 0 & 0 \\
      3 & 2 & 1 & 1 & 0 & 0 \\
      2 & 1 & 0 & 0 & 0 & 0 \\
      1 & 0 & 0 & 0 & 0 & 0
    \end{bmatrix}
  \]
  with equality if $(a,b,c)=(1,2,3)$. Therefore $w$ dominates
  $(0,1,2,3)$.
\end{proof}

Our next remark further reduces the number of weights we must consider.

\begin{remark}
  \label{rem:dual-wts}
  It is clear from Remark~\ref{rem:dual} that if $w \in \Z^4$ is
  admissible for $H$ then $-w$ is admissible for the dual of $H$.  We
  say that the weights $w$ and $-w$ (or any weights equivalent to
  these, in the sense of permuting the entries, or adding the same
  integer to all entries) are {\em dual}.  The list of $12$ weights in
  Lemma~\ref{minimalcaselist} consists of $4$ dual pairs $(0,a,b,c)$
  and $(0,c-b,c-a,c)$ with $a+b \not=c$, and $4$ self-dual weights
  $(0,a,b,a+b)$.
\end{remark}

\section{Completion of the proof}
\label{sec:proof}

In this section we complete the proof that Algorithm~\ref{main_alg} is
correct.

We first note that if $H$ and $H'$ are related by an integral change
of coordinates, and the algorithm works for $H$ then it works for
$H'$. This is because before applying Operations 1, 2 or 3 we always
make an integral change of coordinates that, by Lemma~\ref{lem:cc}, is
unique up to an element of $\GL_4(\OK)$ whose reduction mod $\pi$
preserves a suitable subspace of $k^4$.  The following elementary
lemma then shows that the transformed quadratic forms are again
related by an integral change of coordinates.

\begin{lemma}\label{keylemma}
  Let $\alpha=\diag (I_r, \pi I_{4-r})$ and $P\in \GL_4(\OK)$.  Then
  $P\in \alpha \GL_4(\OK) \alpha^{-1}$ if and only if the reduction of
  $P$ mod $\pi$ preserves the subspace
  $\langle e_1,\dots,e_r \rangle$.
\end{lemma}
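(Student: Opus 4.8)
The plan is to reduce everything to a single block-matrix computation. Writing $P$ in block form with respect to the splitting $\OK^4 = \OK^r \oplus \OK^{4-r}$ adapted to $\alpha = \diag(I_r, \pi I_{4-r})$, say
\[ P = \begin{pmatrix} A & B \\ C & D \end{pmatrix} \]
with $A$ of size $r \times r$ and $D$ of size $(4-r)\times(4-r)$, I would first compute the conjugate
\[ \alpha^{-1} P \alpha = \begin{pmatrix} A & \pi B \\ \pi^{-1} C & D \end{pmatrix}. \]
Since $P = \alpha Q \alpha^{-1}$ with $Q \in \GL_4(\OK)$ is equivalent to $Q = \alpha^{-1} P \alpha \in \GL_4(\OK)$, the membership $P \in \alpha\,\GL_4(\OK)\,\alpha^{-1}$ holds exactly when this conjugate has all its entries in $\OK$. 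All blocks except $\pi^{-1}C$ automatically lie in $\OK$ (as $P$ does), so the condition is simply $C \equiv 0 \pmod{\pi}$.

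It then remains to identify this with the geometric condition. Reducing mod $\pi$ and writing $\overline{P}$ for the image in $\GL_4(k)$, the columns $\overline{P}e_1, \ldots, \overline{P}e_r$ have their $\langle e_{r+1}, \ldots, e_4 \rangle$-components recorded precisely by the block $\overline{C}$; hence $\overline{P}$ preserves $\langle e_1, \ldots, e_r \rangle$ if and only if $\overline{C} = 0$, i.e. if and only if $C \equiv 0 \pmod{\pi}$. Chaining the two equivalences yields the lemma.

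The only point needing a word of care is the backward direction, where I must verify that $\alpha^{-1}P\alpha$ lies in $\GL_4(\OK)$ rather than merely in the ring of matrices over $\OK$. This is immediate: conjugation preserves determinants, so $\det(\alpha^{-1}P\alpha) = \det P \in \OK^\times$, and an integral matrix with unit determinant is automatically invertible over $\OK$. I therefore expect no genuine obstacle; all the content sits in the one conjugation computation displayed above.
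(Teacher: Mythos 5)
Your proof is correct. Note, however, that the paper does not actually prove this lemma at all: its entire ``proof'' is a citation to Lemma~4.1 of Cremona--Fisher--Stoll \cite{CFS}, of which the statement here is the $4\times 4$ special case. So your argument is not so much a different route as a self-contained replacement for an outsourced step. The content of your computation --- writing $P$ in blocks adapted to $\alpha=\diag(I_r,\pi I_{4-r})$, observing that
\[ \alpha^{-1}P\alpha=\begin{pmatrix} A & \pi B \\ \pi^{-1}C & D \end{pmatrix}, \]
so that integrality of the conjugate is exactly the condition $C\equiv 0 \pmod{\pi}$, and identifying $\overline{C}=0$ with preservation of $\langle e_1,\dots,e_r\rangle$ by $\overline{P}$ --- is the standard argument one would expect behind the cited lemma. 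You also correctly dispatch the one genuine subtlety, namely that membership in $\alpha\GL_4(\OK)\alpha^{-1}$ requires the conjugate to be invertible over $\OK$ and not merely integral: since conjugation preserves the determinant and $\det P\in\OK^\times$, an integral conjugate with unit determinant lies in $\GL_4(\OK)$. What your approach buys is independence from the external reference; what the paper's citation buys is brevity and the general $n\times n$ statement, of which only $n=4$ is needed here.
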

\begin{proof} This is \cite[Lemma 4.1]{CFS}.
\end{proof}

Let $H \in \OK[z_{12}, \ldots, z_{34}]$ be a quadratic form.  If there
exists $P \in \PGL_4(K)$ such that
\begin{equation}
\label{aim}
v\left(\frac{1}{\det P} H \circ \wedge^2 P\right) > 0,
\end{equation}
then, as explained in Section~\ref{sec:weights}, one of the $12$
weights in Lemma~\ref{minimalcaselist} is admissible for $H$. Since
the analysis for dual weights (see Remark~\ref{rem:dual-wts}) is
essentially identical, we only need to consider one weight from each dual
pair.  It therefore suffices to consider the 8 weights listed in the
table below.

In the case of weight $(w_1, \ldots, w_4)$ we may suppose, by an
integral change of coordinates, that~\eqref{aim} holds with
$P = \diag(\pi^{w_1} ,\ldots, \pi^{w_4})$. This implies certain lower
bounds on the valuations of the coefficients of $H$.  To specify
these (in a way that is valid even when $\charic(k)=2$), we relabel the
variables $z_{12},z_{13},z_{23},z_{14},z_{24},z_{34}$ as
$z_1, \ldots, z_6$ and write
$H = \sum_{i \leqslant j} H_{ij} z_i z_j$.  We also put
$H_{ji} = H_{ij}$.  Then the lower bounds on the $v(H_{ij})$ are as
recorded in the table.

\small
\begin{center}
\begin{tabular}{|c|c|c|c|} \hline
  {\bf Case~1:} $(0,0,0,0)$ & {\bf Case~2:} $(0,0,0,1)$ &
  {\bf Case~3:} $(0,0,1,1)$ & {\bf Case~4:} $(0,1,1,2)$ \\
$\begin{bmatrix}
1 & 1 & 1 & 1 & 1 & 1 \\
1 & 1 & 1 & 1 & 1 & 1 \\
1 & 1 & 1 & 1 & 1 & 1 \\
1 & 1 & 1 & 1 & 1 & 1 \\
1 & 1 & 1 & 1 & 1 & 1 \\
1 & 1 & 1 & 1 & 1 & 1 
\end{bmatrix}$
& 
$\begin{bmatrix}
2 & 2 & 2 & 1 & 1 & 1 \\
2 & 2 & 2 & 1 & 1 & 1 \\
2 & 2 & 2 & 1 & 1 & 1 \\
1 & 1 & 1 & 0 & 0 & 0 \\
1 & 1 & 1 & 0 & 0 & 0 \\
1 & 1 & 1 & 0 & 0 & 0 
\end{bmatrix}$
& 
$ \begin{bmatrix}
3 & 2 & 2 & 2 & 2 & 1 \\
2 & 1 & 1 & 1 & 1 & 0 \\
2 & 1 & 1 & 1 & 1 & 0 \\
2 & 1 & 1 & 1 & 1 & 0 \\
2 & 1 & 1 & 1 & 1 & 0 \\
1 & 0 & 0 & 0 & 0 & 0 
\end{bmatrix}$
&
$ \begin{bmatrix}
3 & 3 & 2 & 2 & 1 & 1 \\
3 & 3 & 2 & 2 & 1 & 1 \\
2 & 2 & 1 & 1 & 0 & 0 \\
2 & 2 & 1 & 1 & 0 & 0 \\
1 & 1 & 0 & 0 & 0 & 0 \\
1 & 1 & 0 & 0 & 0 & 0 
\end{bmatrix}$ \\
  $r = 0$ & $r = 1,2,3$ & $r = 1,2$ & $r=1,2,3,4$ \\ \hline
  {\bf Case~5:} $(0,0,1,2)$ & {\bf Case~6:} $(0,1,1,3)$ &
  {\bf Case~7:} $(0,1,2,3)$ & {\bf Case~8:} $(0,1,2,4)$ \\
$  \begin{bmatrix}
4 & 3 & 3 & 2 & 2 & 1 \\
3 & 2 & 2 & 1 & 1 & 0 \\
3 & 2 & 2 & 1 & 1 & 0 \\
2 & 1 & 1 & 0 & 0 & 0 \\
2 & 1 & 1 & 0 & 0 & 0 \\
1 & 0 & 0 & 0 & 0 & 0  
\end{bmatrix}$ &
$ \begin{bmatrix}
4 & 4 & 3 & 2 & 1 & 1 \\
4 & 4 & 3 & 2 & 1 & 1 \\
3 & 3 & 2 & 1 & 0 & 0 \\
2 & 2 & 1 & 0 & 0 & 0 \\
1 & 1 & 0 & 0 & 0 & 0 \\
1 & 1 & 0 & 0 & 0 & 0 
\end{bmatrix}$ &
$ \begin{bmatrix}
5 & 4 & 3 & 3 & 2 & 1 \\
4 & 3 & 2 & 2 & 1 & 0 \\
3 & 2 & 1 & 1 & 0 & 0 \\
3 & 2 & 1 & 1 & 0 & 0 \\
2 & 1 & 0 & 0 & 0 & 0 \\
1 & 0 & 0 & 0 & 0 & 0 
\end{bmatrix} $ & 
$ \begin{bmatrix}
6 & 5 & 4 & 3 & 2 & 1 \\
5 & 4 & 3 & 2 & 1 & 0 \\
4 & 3 & 2 & 1 & 0 & 0 \\
3 & 2 & 1 & 0 & 0 & 0 \\
2 & 1 & 0 & 0 & 0 & 0 \\
1 & 0 & 0 & 0 & 0 & 0 
\end{bmatrix}$ \\
  $r = 3,4$ & $r = 3,4$ & $r = 3,4$ & $r=5$ \\ \hline      
\end{tabular}
\end{center}
\normalsize

\medskip

In our analysis of each case, we will assume we are not in an earlier
case. The possibilities for $r = \rank \Hbar$ will be justified below,
but are recorded in the table for convenience.  We complete the proof
that Algorithm~\ref{main_alg} is correct by going through the $8$
cases. In fact we show that if the cases are grouped as
\begin{center} \begin{tabular}{c|c|c|c|c}
  \multirow{2}{*}{Case 1} & Case 2 & \multirow{2}{*}{Case 4}
  & Case 5 & Case 7 \\ & Case 3 &  & Case 6 & Case 8
\end{tabular} \end{center}
then at each iteration of the algorithm we move at least one
column to the left. Therefore, if after visiting Step~1 the first
time and returning to it a further 4 times we still do not have
$v(H) > 0$ then the algorithm is correct to return {\tt FALSE}.

\subsection*{Case~1:} $w =(0,0,0,0)$. In this case we already have
$v(H) > 0$, so $r=0$ and we are done by Step~1.

\subsection*{Case~2:} $w =(0,0,0,1)$. We see from the table that
$\langle e_{12},e_{13},e_{23} \rangle \subset \ker \Hbar$ and so
$r \leqslant 3$.  We cannot have $r=0$, otherwise we would be in
Case~1. If $r=1$ then we are done by Step~2. If $r=2$ then we are done
by Step~3. If $r=3$ then Step~5 directly applies Operation~1.
(By ``directly'' we mean that there is no preliminary integral
change of coordinates.)
Since this gives $v(H) > 0$ we are in Case~1 on the next iteration.

\subsection*{Case~3:} $w =(0,0,1,1)$. We see from the table that
$\Hbar = \ell z_{34}$ for some linear form $\ell$.  One of the
transformations considered in Step~4 is to directly apply
Operation~2. Since this gives $v(H) > 0$  we are in Case~1
on the next iteration.

\subsection*{Case~4:} $w=(0,1,1,2)$. We see from the table that
$\langle e_{12},e_{13} \rangle \subset \ker \Hbar$ and so
$r \leqslant 4$.  If $r = 4$ then Step~5 directly applies Operation~1
or Operation~3.  Then on the next iteration either $(0,0,0,1)$ or
$(0,1,1,1)$ is admissible, which means we are in Case~2 or its dual.
If $r \leqslant 3$ then by applying a block diagonal element of
$\GL_4(\OK)$ with blocks of sizes $1$, $2$ and $1$, we may suppose
that $H_{35} \equiv H_{45} \equiv 0 \pmod{\pi}$.  If $r=3$ then
$\ker \Hbar = \langle e_{12},e_{13}, a e_{23} + b e_{14} \rangle$ for
some $a,b \in k$. If $a=0$ or $b=0$ then $\ker \Hbar$ is isotropic for
$G$. Otherwise $\langle e_{12},e_{13} \rangle$ is the unique
codimension~1 isotropic subspace. Either way, Step~5 directly applies
Operation~1 or Operation~3, and we are done as before.

We now suppose that $r \leqslant 2$ and divide into the following cases.
\begin{itemize}
\item Suppose that $H_{36} \not\equiv 0 \pmod{\pi}$ and
  $H_{46} \not\equiv 0 \pmod{\pi}$. Since $r \leqslant 2$ we have
  $\Hbar = \ell z_{34}$ for some linear form $\ell$.  Since there is
  no integral change of coordinates taking $\ell$ to $z_{34}$ the only
  possible outcome of Step~4 is to directly apply Operation~2. This
  brings us to Case~3.
\item Suppose that $H_{36} \equiv 0 \pmod{\pi}$ and
  $H_{46} \not\equiv 0 \pmod{\pi}$. Then $v(H_{33})=1$, otherwise we
  would be in Case~2. We again have $\Hbar = \ell z_{34}$ for some
  linear form $\ell$.  Although there does now exist an integral
  change of coordinates taking $\ell$ to $z_{34}$, following this up
  with Operation~2 does not preserve that $v(H) \geqslant 0$.  So
  again the only possible outcome of Step~4 is to directly apply
  Operation~2. This brings us to Case~3.
\item Suppose that $H_{36} \not\equiv 0 \pmod{\pi}$ and
  $H_{46} \equiv 0 \pmod{\pi}$. This is essentially the same
  as the previous case by duality.
\item Suppose that $H_{36} \equiv H_{46} \equiv 0 \pmod{\pi}$.  Then
  $\Hbar$ is a quadratic form in $z_{24}$ and $z_{34}$ only. If this
  factors over $k$ then either of the transformations in Step~4 brings
  us to Case~3. Otherwise we proceed to Step~5 which directly applies
  Operation~1 or Operation~3.  As before, this brings us to Case~2 or
  its dual.
\end{itemize}

\subsection*{Case~5:} $w=(0,0,1,2)$. Applying a block diagonal element
of $\GL_4(\OK)$ with blocks of sizes $2$, $1$ and $1$, we may suppose
that $H_{26} \equiv 0 \pmod{\pi}$. Then
$H_{36} \not\equiv 0 \pmod{\pi}$ (otherwise we would be in Case~2) and
$H_{44},H_{45},H_{55}$ cannot all vanish mod $\pi$ (otherwise we would
be in Case~3). Therefore
$\langle e_{12},e_{13} \rangle \subset \ker \Hbar$ and $r = 3$ or
$4$. The only $3$-dimensional isotropic subspaces for $G$ that contain
$\langle e_{12},e_{13} \rangle$ are
$\langle e_{12},e_{13},e_{23} \rangle$ and
$\langle e_{12},e_{13},e_{14} \rangle$. Therefore one of the
transformations considered in Step~5 is to directly apply Operation~1
or Operation~3 (the latter only being a possibility if
$H_{44} \equiv 0 \pmod{\pi}$). It follows that at the next iteration
we have $r \leqslant 2$, and so are in Case~4 or earlier.

\subsection*{Case~6:} $w=(0,1,1,3)$. Applying a block diagonal element
of $\GL_4(\OK)$ with blocks of sizes $1$, $2$ and $1$, we may suppose
that $H_{15} \equiv 0 \pmod{\pi^2}$. We have
$H_{44} \not\equiv 0 \pmod{\pi}$ (otherwise we would be in Case~4) and
$H_{35} \not\equiv 0 \pmod{\pi}$ (otherwise we would be in
Case~5). Therefore $\langle e_{12},e_{13} \rangle \subset \ker \Hbar$
and $r = 3$ or $4$. Exactly as in Case~5 we find that at the next
iteration we have $r \leqslant 2$, and so are in Case~4 or earlier.

\subsection*{Case~7:} $w=(0,1,2,3)$. We have
$H_{26} \not\equiv 0 \pmod{\pi}$ (otherwise we would be in Case~4),
and $H_{35},H_{45},H_{55}$ cannot all vanish mod $\pi$ (otherwise we
would be in Case~3). Therefore $r = 3$ or $4$, and
$\langle e_{12} \rangle \subset \ker \Hbar \subset \langle
e_{12},e_{13},e_{23},e_{14} \rangle$.

If $r = 4$ then
$\ker \Hbar = \langle e_{12}, a e_{13} + b e_{23} + c e_{14} \rangle$
for some $a,b,c \in k$.  If $b, c \not=0$ then
$\langle e_{12} \rangle$ is the unique codimension $1$ subspace of
$\ker \Hbar$ that is isotropic for $G$. Therefore, Step~5 directly
applies Operation~2, which brings us to Case~4.  If $b=0$ then
$c \not= 0$, and by applying a block diagonal element of $\GL_4(\OK)$
with blocks of sizes $1$, $1$ and $2$, we may suppose that $a =
0$. Then the $3$-dimensional isotropic subspaces for $G$ containing
$\ker \Hbar = \langle e_{12}, e_{14} \rangle$ are
$\langle e_{12}, e_{13}, e_{14} \rangle$ and
$\langle e_{12}, e_{14}, e_{24} \rangle$. Step~5 applies either
$\diag(1,\pi,\pi,\pi)$ or $\diag(1,1,\pi,1)$ bringing us to Case~5 or
Case~6. The case $c = 0$ is similar by duality.

If $r=3$ then $\ker \Hbar = \langle e_{12}, e_{23} + a e_{13},
e_{14} + b e_{13} \rangle$ for some $a,b \in k$. By applying a
block diagonal element of $\GL_4(\OK)$ with blocks of sizes $2$ and $2$,
we may suppose that $a=b=0$. 
Then
$H_{35} \equiv H_{36}\equiv H_{45}\equiv H_{46} \equiv 0 \pmod{\pi}$ and
$H_{55} \not\equiv 0 \pmod{\pi}$. The codimension $1$ subspaces of
$\ker \Hbar = \langle e_{12}, e_{23}, e_{14} \rangle$ that are isotropic
for $G$ are $\langle e_{12}, e_{23} \rangle$ and
$\langle e_{12}, e_{14} \rangle$.  The $3$-dimensional
isotropic subspaces for $G$ containing one of these spaces are
\[ \langle e_{12}, e_{13}, e_{23} \rangle, \,\,\, \langle e_{12}, e_{13},
  e_{14} \rangle, \,\,\, \langle e_{12}, e_{23}, e_{24} \rangle, \,\,\,
  \langle e_{12}, e_{14}, e_{24} \rangle. \]
The first two of these correspond to directly
applying Operation~1 or Operation~3, which brings us to Case~5 or its
dual. The last two correspond to transformations which fail to
preserve that $v(H) \geqslant 0$, and so cannot be selected by Step~5.

\subsection*{Case~8:} $w=(0,1,2,4)$. We have
$H_{35} \not\equiv 0 \pmod{\pi}$ (otherwise we would be in Case~5),
$H_{26} \not\equiv 0 \pmod{\pi}$ (otherwise we would be in Case~6),
and $H_{44} \not\equiv 0 \pmod{\pi}$ (otherwise we would be in
Case~7). Therefore $r=5$ and $\ker \Hbar = \langle e_{12}
\rangle$. Step~5 directly applies Operation~2 which brings us to
Case~6.

\medskip

\begin{example}
  We give three examples where Algorithm~\ref{main_alg} takes the
  maximum of $4$ iterations to give $v(H) > 0$. The first two examples
  start in Case~7, with $\rank \Hbar = 3$ or $4$, and the final one
  starts in Case~8. In the first two examples there are two choices on
  the first iteration. We made an arbitrary choice in each case,
  but in fact with the other choices the algorithm would still
  have taken $4$ iterations.

Let $K = \Q$ and $v = v_p$ for any choice of prime number $p$.
An arrow labelled $(w_1, \ldots, w_4)$ indicates that we replace
$H$ by $\frac{1}{\det P} H \circ \wedge^2 P$ where
$P = \diag(p^{w_1}, \ldots, p^{w_4})$.
\begin{align*}
  p^5 z_{12}^2 + z_{13} z_{34} + p z_{23}^2 + p z_{14}^2 + z_{24}^2
  & \stackrel{(0,0,0,1)}{\ra} 
    p^4 z_{12}^2 + z_{13} z_{34} + z_{23}^2 + p^2 z_{14}^2 + p z_{24}^2 \\
  & \stackrel{(0,0,1,0)}{\ra}
    p^3 z_{12}^2 + p z_{13} z_{34} + p z_{23}^2 + p z_{14}^2 + z_{24}^2 \\
  & \stackrel{(0,1,0,1)}{\ra}
    p^3 z_{12}^2 + z_{13} z_{34} + p z_{23}^2 + p z_{14}^2 + p^2 z_{24}^2 \\
  & \stackrel{(0,0,1,1)}{\ra}
  p( z_{12}^2 + z_{13} z_{34} + z_{23}^2 + z_{14}^2 + p z_{24}^2).
\end{align*}
\begin{align*}
  p^5 z_{12}^2 + z_{13} z_{34} + p z_{23}^2 + z_{14} z_{24}
  & \stackrel{(0,0,0,1)}{\ra} 
    p^4 z_{12}^2 + z_{13} z_{34} + z_{23}^2 + p z_{14} z_{24} \\
  & \stackrel{(0,0,1,0)}{\ra}
    p^3 z_{12}^2 + p z_{13} z_{34} + p z_{23}^2 + z_{14} z_{24} \\
  & \stackrel{(0,1,0,1)}{\ra}
    p^3 z_{12}^2 + z_{13} z_{34} + p z_{23}^2 + p z_{14} z_{24} \\
  & \stackrel{(0,0,1,1)}{\ra}
  p( z_{12}^2 + z_{13} z_{34} + z_{23}^2 + z_{14} z_{24}). 
\end{align*}
\begin{align*}
  p^6 z_{12}^2 + z_{13} z_{34} + z_{23} z_{24} + z_{14}^2
  & \stackrel{(0,0,1,1)}{\ra} 
    p^4 z_{12}^2 + p z_{13} z_{34} + z_{23} z_{24} + z_{14}^2 \\
  & \stackrel{(0,0,0,1)}{\ra}
    p^3 z_{12}^2 + p z_{13} z_{34} + z_{23} z_{24} + p z_{14}^2 \\
  & \stackrel{(0,1,0,1)}{\ra}
    p^3 z_{12}^2 + z_{13} z_{34} + p z_{23} z_{24} + p z_{14}^2 \\
  & \stackrel{(0,0,1,1)}{\ra}
  p( z_{12}^2 + z_{13} z_{34} + z_{23} z_{24} + z_{14}^2). 
\end{align*}
\end{example}

\bibliographystyle{alpha}

\end{document}